\documentclass[11pt]{amsart}

\author[C.~Sanna]{Carlo Sanna$^\dagger$}
\address{\parbox{\linewidth}{
Politecnico di Torino, Department of Mathematical Sciences\\
Corso Duca degli Abruzzi 24, 10129 Torino, Italy\\[-8pt]}}
\email{carlo.sanna.dev@gmail.com}
\thanks{$\dagger\,$C.~Sanna is a member of GNSAGA of INdAM and of CrypTO, the group of Cryptography and Number~Theory of Politecnico di Torino}

\keywords{asymptotic formula; Fibonacci number; least common multiple}
\subjclass[2010]{Primary: 11B39, Secondary: 11B37, 11N37.}
\title{On the l.c.m.~of shifted Fibonacci numbers}

\usepackage{amsmath}
\usepackage{amssymb}
\usepackage{amsthm}
\usepackage{geometry}
\geometry{left=1.15in, right=1.15in, top=.72in, bottom=.72in}
\usepackage{color}
\usepackage{hyperref}
\usepackage{enumerate}
\hypersetup{colorlinks=true}
\usepackage{placeins}

\newtheorem{thm}{Theorem}[section]

\newtheorem{lem}[thm]{Lemma}
\theoremstyle{remark}

\uchyph=0

\DeclareMathOperator*{\lcm}{lcm}
\DeclareMathOperator{\Li}{Li}

\begin{document}

\begin{abstract}
Let $(F_n)_{n \geq 1}$ be the sequence of Fibonacci numbers.
Guy and Matiyasevich proved that 
\begin{equation*}
\log \lcm (F_1, F_2, \dots, F_n) \sim \frac{3 \log \alpha}{\pi^2} \cdot n^2 \quad \text{as } n \to +\infty,
\end{equation*}
where $\lcm$ is the least common multiple and $\alpha := \big(1 + \sqrt{5}) / 2$ is the golden~ratio.

We prove that for every periodic sequence $\mathbf{s} = (s_n)_{n \geq 1}$ in $\{-1,+1\}$ there exists an effectively computable rational number $C_{\mathbf{s}} > 0$ such that
\begin{equation*}
\log \lcm (F_3 + s_3, F_4 + s_4, \dots, F_n + s_n) \sim \frac{3 \log \alpha}{\pi^2} \cdot C_\mathbf{s} \cdot n^2 , \quad \text{as } n \to +\infty .
\end{equation*}
Moreover, we show that if $(s_n)_{n \geq 1}$ is a sequence of independent uniformly distributed random variables in $\{-1,+1\}$ then
\begin{equation*}
\mathbb{E}\big[\log \lcm (F_3 + s_3, F_4 + s_4, \dots, F_n + s_n)\big] \sim \frac{3 \log \alpha}{\pi^2} \cdot \frac{15 \Li_2(1 / 16)}{2} \cdot n^2 , \quad \text{as } n \to +\infty ,
\end{equation*}
where $\Li_2$ is the dilogarithm function.
\end{abstract}

\maketitle

\section{Introduction}

Let $(F_n)_{n \geq 1}$ be the sequence of Fibonacci numbers, defined recursively by $F_1 = F_2 = 1$ and $F_{n + 2} = F_{n + 1} + F_n$, for every integer $n \geq 1$.
Guy and Matiyasevich~\cite{MR1712797} proved that, as $n \to +\infty$, 
\begin{equation}\label{equ:guy}
\log \lcm (F_1, F_2, \dots, F_n) \sim \frac{3 \log \alpha}{\pi^2} \cdot n^2 ,
\end{equation}
where $\lcm$ denotes the least common multiple and $\alpha := \big(1 + \sqrt{5}) / 2$ is the golden~ratio.
This result was extended by Kiss--M\'{a}ty\'{a}s~\cite{MR993902}, Akiyama~\cite{MR1077711}, and Tropak~\cite{MR1114366} to more general binary recurrences, and by Akiyama~\cite{MR1242715, MR1394375} to sequences satisfying some special divisibility properties (see also~\cite{MR3150887}).

We study what happens if each Fibonacci number $F_k$ in~\eqref{equ:guy} is replaced by a \emph{shifted} Fibonacci number $F_k \pm 1$, for various choices of signs.
Arithmetic properties of shifted Fibonacci have been studied before.
For example, Bugeaud, Luca, Mignotte, and Siksek~\cite{MR2399185} determined all the shifted Fibonacci numbers that are perfect powers; Marques~\cite{MR2892008} gave formulas for the order of appearance of shifted Fibonacci numbers; and Pongsriiam~\cite{MR3620575} found all shifted Fibonacci numbers that are products of Fibonacci numbers.

Our first result concerns periodic sequences of signs.

\begin{thm}\label{thm:periodic}
For every periodic sequence $\mathbf{s} = (s_n)_{n \geq 1}$ in $\{-1, +1\}$, there exists an effectively computable rational number $C_\mathbf{s} > 0$ such that
\begin{equation*}
\log \lcm (F_3 + s_3, F_4 + s_4, \dots, F_n + s_n) \sim \frac{3 \log \alpha}{\pi^2} \cdot C_\mathbf{s} \cdot n^2 ,
\end{equation*}
as $n \to +\infty$. 
(The least common multiple starts from $F_3 + s_3$ to avoid zero terms.)
\end{thm}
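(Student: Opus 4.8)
The plan is to reduce the shifted problem to the ordinary Fibonacci/Lucas primitive‑part count that already underlies \eqref{equ:guy}. The starting point is the classical family of factorization identities for shifted Fibonacci numbers: writing $L_m$ for the Lucas numbers, for every $k \ge 3$ and every sign $s_k \in \{-1,+1\}$ one has $F_k + s_k = F_{a_k} L_{b_k}$, where $(a_k,b_k)$ are explicit indices of size $\approx k/2$ (for odd $k = 2m+1$ the pair $\{a_k,b_k\} = \{m,m+1\}$, for even $k = 2m$ the pair $\{m-1,m+1\}$), and where the assignment of the two factors and of the shift $\pm 1$ is governed by $k \bmod 4$. Thus $\lcm(F_3+s_3,\dots,F_n+s_n)$ is the least common multiple of a controlled mix of Fibonacci numbers $F_{a_k}$ and Lucas numbers $L_{b_k}$ whose indices run through an eventually periodic subset of $\{1,\dots,\lfloor n/2\rfloor\}$ determined by $\mathbf s$.

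Next I would pass to prime valuations via $\log\lcm(c_3,\dots,c_n) = \sum_p (\log p)\max_{3 \le k \le n} v_p(c_k)$ and invoke the theory of the rank of apparition. For a prime $p \ge 7$ with Fibonacci rank $z = z(p)$ one has $p \mid F_m \iff z \mid m$, while $p \mid L_m$ happens exactly when $z$ is even and $m \equiv z/2 \pmod z$; moreover $v_p(F_m)$ and $v_p(L_m)$ exceed $1$ only by a lifting‑the‑exponent term, which contributes $o(n^2)$ in the end, and the finitely many primes $p \in \{2,3,5\}$ contribute only $O(n)$. Grouping primes by their rank and using the primitive‑part estimate $\log\Phi_z \sim \phi(z)\log\alpha$ (where $\Phi_z$ is the product of primes of rank exactly $z$, the same input that yields \eqref{equ:guy}), the leading term becomes
\begin{equation*}
\log\lcm(F_3+s_3,\dots,F_n+s_n) \sim (\log\alpha)\sum_{z} \phi(z)\,\mathbf 1[\,z \text{ contributes}\,],
\end{equation*}
where a rank $z$ \emph{contributes} precisely when either $z \mid a_k$ for some admissible $k$ (through a Fibonacci factor) or $z$ is even and $b_k \equiv z/2 \pmod z$ for some admissible $k$ (through a Lucas factor). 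Since whether $p$ enters the $\lcm$ depends only on this union of two conditions, primes of a given rank are counted once, regardless of how many factors they divide.

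It remains to evaluate the weighted count $\sum_z \phi(z)\,\mathbf 1[\,z \text{ contributes}\,]$. Here the Fibonacci channel lets ranks up to $\approx n/2$ contribute (since $a_k \le n/2$ forces $z \le n/2$), whereas the Lucas channel lets \emph{even} ranks up to $\approx n$ contribute, because $b_k \equiv z/2 \pmod z$ is already solvable with $b_k = z/2 \le n/2$ as soon as $z \le n$; this extra reach is exactly what allows $C_{\mathbf s}$ to exceed the value $1/4$ produced by the Fibonacci factors alone. Because $\mathbf s$ is periodic, the map $k \mapsto (a_k,b_k,\text{factor type})$ is eventually periodic, so the admissible indices form a finite union of arithmetic progressions and, for each $z$, the contribution indicator is decided by a finite, effectively computable residue condition with a rational density. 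Splitting the range of $z$ by size and summing $\phi(z)$ against these indicators by partial summation from $\sum_{z \le x}\phi(z) \sim \tfrac{3}{\pi^2}x^2$ (in arithmetic progressions) then produces $\tfrac{3\log\alpha}{\pi^2}C_{\mathbf s}n^2$ with an explicit rational $C_{\mathbf s} > 0$.

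The main obstacle is precisely this last bookkeeping: determining, uniformly in $z$ and across the size ranges of $z$, exactly when a rank contributes through the \emph{union} of the Fibonacci and Lucas conditions attached to the periodic index pattern, and then summing the resulting $\phi$‑weighted indicator into closed form. The interaction of the two channels (the same rank possibly reachable both ways) and the delicate treatment of the ranges $z \asymp n/2$ and $z \asymp n$, where the indicator is most sensitive to $\mathbf s$, are what make $C_{\mathbf s}$ nontrivial; by contrast the error terms (higher prime powers, the primes $2,3,5$, and the $\gcd$ overlaps between $F_{a_k}$ and $L_{b_j}$) are all routinely $o(n^2)$.
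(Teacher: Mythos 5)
Your outline follows essentially the same route as the paper: the factorization $F_k+s_k=F_{a_k}L_{b_k}$ is the paper's Lemma~\ref{lem:shifted}, your ``product of primes of rank exactly $z$'' is the primitive part $\Phi_z$ of \eqref{equ:cyclotomic} with $\log\Phi_z=\varphi(z)\log\alpha+O(1)$ (Lemma~\ref{lem:logPhi}), your contribution conditions ($z\mid a_k$, resp.\ $z$ even and $b_k\equiv z/2\pmod z$) are exactly membership of $z$ in $\mathcal{D}(a_k)$, resp.\ $\mathcal{D}^\prime(b_k)$, i.e.\ in the paper's set $\mathcal{M}_{\mathbf{s}}(n)$, and the error terms you list are handled in the paper by Lemma~\ref{lem:gcdPhi}, Lemma~\ref{lem:gcdFmLn} and the bound \eqref{equ:smallprimes}. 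All of that part of your argument is sound.

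The genuine gap is precisely at the step you yourself flag as ``the main obstacle,'' and it is not mere bookkeeping: your claim that, for each $z$, the contribution indicator is ``decided by a finite residue condition with a rational density'' is not correct as stated. Whether a rank $z$ in a given residue class contributes depends on whether some multiple $tz$ (an odd multiple, in the Lucas channel) of $z$ falls into one of the admissible progressions below $n/2$, and hence on the \emph{minimal} such multiplier; so the set of contributing $z$ is a residue condition \emph{truncated at a rational multiple of $n$ that varies with the residue class}, not a set of fixed density. This is the content of the paper's Lemmas~\ref{lem:Dn} and~\ref{lem:Dnprime}: for $h$ ranging over $\mathcal{A}_{r,m}(x)$ one has $\bigcup_h\mathcal{D}(h)=\bigcup_{s\in\mathcal{S}}\mathcal{A}_{s,m}(x/t(s))$, and similarly for $\mathcal{D}^\prime$ with modulus $2m$ and cutoffs $2x/t(s)$. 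Only after this structural identity (plus an inclusion--exclusion over the overlapping progressions) can one apply $\sum_{d\in\mathcal{A}_{r,2m}(\theta n)}\varphi(d)\sim\tfrac{3}{\pi^2}c_{r,2m}\theta^2n^2$ (Lemma~\ref{lem:sumtotient}) to obtain the rational constant $C_{\mathbf{s}}=\sum_r c_{r,2m}\theta_r^2$ and, indeed, to establish that the limit exists at all. Your proposal identifies where the difficulty lies but does not supply the lemma that resolves it, so the existence, rationality, and effective computability of $C_{\mathbf{s}}$ remain unproved in your sketch.
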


We computed the constant $C_{\mathbf{s}}$ for periodic sequences $\mathbf{s}$ with short period.
We found that $C_{\mathbf{s}} = 1 / 2$ for most of such sequences.
In particular, $C_{\mathbf{s}} = 1 / 2$ for all periodic sequences with period less than $5$.
Moreover, all the periodic sequences $\mathbf{s}$ with $C_{\mathbf{s}} \neq 1 / 2$ and period $5$ or $6$ are listed in Table~\ref{tab:period5} and Table~\ref{tab:period6}, respectively.

\begin{table}[h]
\centering
\begin{tabular}{cc|cc|cc|cc}
$\mathbf{s}$ & $C_{\mathbf{s}}$ & $\mathbf{s}$ & $C_{\mathbf{s}}$ & $\mathbf{s}$ & $C_{\mathbf{s}}$ & $\mathbf{s}$ & $C_{\mathbf{s}}$ \\\hline
\texttt{----+} & $43/96$ & \texttt{-+--+} & $43/96$   & \texttt{+----} & $43/96$  & \texttt{+-+++} & $91/192$ \rule{0pt}{2.6ex}\\
\texttt{---+-} & $43/96$ & \texttt{-+-+-} & $43/96$   & \texttt{+--+-} & $43/96$  & \texttt{++-+-} & $17/36$  \\
\texttt{--+--} & $11/24$ & \texttt{-+-++} & $91/192$  & \texttt{+-+--} & $43/96$  & \texttt{++-++} & $17/36$  \\
\texttt{--+-+} & $11/24$ & \texttt{-++-+} & $91/192$  & \texttt{+-+-+} & $91/192$ & \texttt{+++-+} & $91/192$ \\
\texttt{-+---} & $43/96$ & \texttt{-++++} & $91/192$  & \texttt{+-++-} & $91/192$ & \texttt{++++-} & $91/192$ \\
\end{tabular}
\vspace{1em}
\caption{All period-$5$ sequences $\mathbf{s}$ such that $C_{\mathbf{s}} \neq 1/2$.}
\label{tab:period5}
\end{table}

\begin{table}[h]
\centering
\begin{tabular}{cc|cc|cc|cc}
$\mathbf{s}$ & $C_{\mathbf{s}}$ & $\mathbf{s}$ & $C_{\mathbf{s}}$ & $\mathbf{s}$ & $C_{\mathbf{s}}$ & $\mathbf{s}$ & $C_{\mathbf{s}}$ \\\hline
\texttt{-----+} & $13/32$ & \texttt{--++-+} & $7/16$  & \texttt{+-+-++} & $29/64$ & \texttt{++--++} & $11/24$ \rule{0pt}{2.6ex}\\
\texttt{----++} & $13/32$ & \texttt{--++++} & $29/64$ & \texttt{+-++--} & $29/64$ & \texttt{++-+--} & $13/32$ \\
\texttt{---+--} & $7/16$  & \texttt{-+----} & $13/32$ & \texttt{+-+++-} & $29/64$ & \texttt{+++-+-} & $11/24$ \\
\texttt{---+-+} & $7/16$  & \texttt{-+---+} & $13/32$ & \texttt{+-++++} & $29/64$ & \texttt{+++-++} & $11/24$ \\
\texttt{--+-++} & $29/64$ & \texttt{-+--++} & $13/32$ & \texttt{++----} & $13/32$ & \texttt{++++--} & $29/64$ \\
\texttt{--++--} & $7/16$  & \texttt{-+-+--} & $13/32$ & \texttt{++--+-} & $11/24$ & \texttt{+++++-} & $29/64$ \\
\end{tabular}
\vspace{1em}
\caption{All period-$6$ sequences $\mathbf{s}$ such that $C_{\mathbf{s}} \neq 1/2$.}
\label{tab:period6}
\end{table}

\FloatBarrier

Our second result regards random sequences of signs. 

\begin{thm}\label{thm:random}
Let $(s_n)_{n \geq 1}$ be a sequence of independently uniformly distributed random variables in $\{-1, +1\}$.
Then
\begin{equation*}
\mathbb{E}\big[\log \lcm (F_3 + s_3, F_4 + s_4, \dots, F_n + s_n)\big] \sim \frac{3 \log \alpha}{\pi^2} \cdot \frac{15 \Li_2(1 / 16)}{2} \cdot n^2 ,
\end{equation*}
as $n \to +\infty$, where $\Li_2(z) := \sum_{n\,=\,1}^\infty z^n / n^2$ denotes the dilogarithm.
\end{thm}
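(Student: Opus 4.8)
The plan is to reduce the expectation to a sum over primes and then to exploit the same factorizations of $F_m \pm 1$ that underlie Theorem~\ref{thm:periodic}. Writing $\log \lcm(F_3+s_3,\dots,F_n+s_n) = \sum_p (\log p)\,\max_{3\le m\le n} v_p(F_m+s_m)$ and discarding the contribution of the prime powers $p^e$ with $e \ge 2$ (which I expect to be $o(n^2)$ by a routine counting argument), the leading term is $\sum_p (\log p)\,\mathbb 1[\,p \mid F_m+s_m \text{ for some } 3\le m\le n\,]$. The observation that injects the randomness is this: for a fixed odd prime $p$ and a fixed $m$, the residue $F_m \bmod p$ is deterministic, so $p \mid F_m+s_m$ can hold only if $F_m \equiv \pm 1 \pmod p$, and in that case it holds for exactly one of the two values of $s_m$, hence with probability $1/2$. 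Since the $s_m$ are independent, I would obtain
\begin{equation*}
\Pr\big[\,p \mid F_m+s_m \text{ for some } 3\le m\le n\,\big] = 1 - 2^{-t_p(n)}, \qquad t_p(n) := \#\{3\le m\le n : F_m \equiv \pm 1 \pmod p\}.
\end{equation*}

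First I would group primes by their rank of apparition $z(p)=d$. The point is that whether $p$ divides $F_m-1$ (resp.\ $F_m+1$) depends only on $d$: using the classical identities that factor each $F_m-1$ and $F_m+1$ as a product $F_i L_j$ with $\{i,j\}$ essentially $\{\lfloor m/2\rfloor,\lceil m/2\rceil\}$, one has $p \mid F_iL_j$ iff $d \mid i$ or ($d\mid 2j$ and $d\nmid j$). Consequently $t_p(n)$ depends only on $d$; call it $T(d,n)$. Combining this with the primitive-part asymptotic $\sum_{z(p)=d}\log p \sim \phi(d)\log\alpha$ (which follows from $\sum_{e\mid d}\log \Phi_e = \log F_d \sim d\log\alpha$ and Möbius inversion, since $\mu \ast \mathrm{id} = \phi$), I would arrive at
\begin{equation*}
\mathbb E\big[\log \lcm(F_3+s_3,\dots,F_n+s_n)\big] \sim \log\alpha \sum_{d} \phi(d)\big(1 - 2^{-T(d,n)}\big).
\end{equation*}

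The next step is to compute $T(d,n)$ from the factorizations. Counting the admissible $m \le n$ shows that each multiple of $d$ up to $\approx n/2$ that can serve as a Fibonacci index $i$ is reached by a bounded number (four, generically) of values of $m$, and likewise for the Lucas indices $j$; the Lucas contribution is present only when $d$ is even, since $d\mid 2j$ with $d\nmid j$ forces $d$ even. This yields $T(d,n) \approx 2n/d$ for $d$ odd and $T(d,n)\approx 4n/d$ for $d$ even, so that $2^{-T(d,n)}$ is of the shape $16^{-\lfloor n/(2d)\rfloor}$ up to a parity refinement. Finally I would evaluate the resulting sum by partial summation against $\sum_{d\le y}\phi(d)\sim \tfrac{3}{\pi^2}y^2$ together with its restrictions to $d$ odd and $d$ even, which carry the densities $\tfrac{2}{\pi^2}y^2$ and $\tfrac{1}{\pi^2}y^2$. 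Writing $r = \lfloor n/(2d)\rfloor$ and telescoping the weight $\tfrac1{r^2}-\tfrac1{(r+1)^2}$ against $16^{-r}$ produces exactly $\sum_{r\ge 1} 16^{-r}/r^2 = \Li_2(1/16)$, and assembling the two parities is designed to recover the rational factor $15/2$.

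The main obstacle will be making the error terms uniform enough to survive the summation over $d$. Three points need care: (i) showing that the prime powers and the primes of large rank contribute only $o(n^2)$, so that the reduction to $\mathbb 1[\,p\mid F_m+s_m\,]$ and to the primitive-part asymptotic is legitimate; (ii) controlling the error in $\sum_{z(p)=d}\log p \sim \phi(d)\log\alpha$ uniformly in $d$, which is the analytic heart shared with Theorem~\ref{thm:periodic}; and (iii) --- the most delicate for pinning down the exact constant --- handling the floor functions and the cut-off of the index ranges near $d \approx n/2$. Because $T(d,n)$ is bounded (of size $\approx 4$) precisely where $d$ is comparable to $n/2$, this boundary region already contributes at the leading order $n^2$, so the admissible counts there must be determined exactly rather than merely asymptotically in order to recover the precise value $\tfrac{15\,\Li_2(1/16)}{2}$.
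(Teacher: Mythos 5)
Your overall architecture is the same as the paper's: factor each $F_m \pm 1$ as $F_i L_j$ via the eight classical identities, observe that missing a given primitive index requires four specific signs (probability $2^{-4}$ per relevant multiple, with exact independence), deduce that the index $d$ is missed with probability $16^{-\lfloor n\gcd(2,d)/(2d)\rfloor}$, and evaluate $\sum_d \varphi(d)\big(1 - 16^{-\lfloor \cdot \rfloor}\big)$ by the telescoping that produces $\frac{1-z}{z}\Li_2(z) = 15\Li_2(1/16)$ together with the odd/even densities $\frac{2}{\pi^2}$ and $\frac{1}{\pi^2}$; your parity counts and the resulting constant agree with the paper's $c_{1,2}=2/3$, $c_{2,2}=1/3$. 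Your independence argument (each event ``$p \mid F_m + s_m$'' depends on the single variable $s_m$) is, if anything, cleaner than the paper's, which must separate indices by $|k_1-k_2|\geq 3$.

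The genuine gap is in your very first reduction. You discard the prime powers $p^e$ with $e\geq 2$ as ``$o(n^2)$ by a routine counting argument'', and you later invoke $\sum_{z(p)=d}\log p \sim \varphi(d)\log\alpha$, justified by M\"obius inversion from $\sum_{e\mid d}\log\Phi_e = \log F_d$. But M\"obius inversion yields $\log\Phi_d = \varphi(d)\log\alpha + O(\log d)$, and $\log\Phi_d = \sum_{z(p)=d} v_p(F_d)\log p + O(\log d)$ counts primes \emph{with multiplicity}; the radical version $\sum_{z(p)=d}\log p$ could in principle be as small as about $\tfrac12\varphi(d)\log\alpha$ if the primitive part of $F_d$ has a large square factor. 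Ruling this out is not known unconditionally (it is the near-squarefreeness problem for primitive parts, in the Wall--Sun--Sun circle of questions), so neither your discarding of higher prime powers nor the radical asymptotic is available, and the lower bound on the l.c.m.\ breaks at leading order. The fix is exactly what the paper does: never pass to the radical. If some $F_m+s_m$ is divisible by $p$ with $z(p)=d$, then it is automatically divisible by $p^{v_p(F_d)}$ (since $p\mid F_i$ forces $d\mid i$ and hence $F_d\mid F_i$, and similarly on the Lucas side), so each hit index $d$ contributes the full $\log\Phi_d = \varphi(d)\log\alpha + O(1)$, the only excess being $v_p(i/d)$, which is supported on primes $p\leq n$ and absorbed into $O(n^2/\log n)$. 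With that single change your plan closes and coincides with the paper's proof; your point (iii) about the boundary $d\asymp n/2$ is real but is handled by keeping the exact floor $\lfloor n\gcd(2,d)/(2d)\rfloor$ throughout, as in the paper's Lemma~\ref{lem:sumtotientexp}.
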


Using the methods of the proofs of Theorem~\ref{thm:periodic} and Theorem~\ref{thm:random}, it should be possible to prove similar results, where the sequence of Fibonacci numbers is replaced by the sequence of Lucas numbers or by a sequence of integers powers $(a^n)_{n \geq 1}$, with $a \geq 2$ a fixed integer.
Also, one could consider what happens for a deterministic non-periodic sequence of signs $(s_n)_{n \geq 1}$.
We leave these as problems for the interested reader.

\subsection*{Notation}

We employ the Landau–Bachmann ``Big Oh'' notation $O$ with its usual meaning.
Any dependence of the implied constants is indicated with subscripts.
We let $\varphi$ denote the Euler's totient function.
We reserve the letter $p$ for prime numbers.

\section{Preliminaries on Fibonacci and Lucas Numbers}

Let $(L_n)_{n \geq 1}$ be the sequence of Lucas numbers, defined recursively by $L_1 = 1$, $L_2 = 3$, and $L_{n + 2} = L_{n + 1} + L_n$, for every integer $n \geq 1$.
It is well known that the Binet's formulas
\begin{equation}\label{equ:binet}
F_n = \frac{\alpha^n - \beta^n}{\alpha - \beta} \qquad\text{and}\qquad L_n = \alpha^n + \beta^n ,
\end{equation}
hold for every integer $n \geq 1$, where $\alpha := \big(1 + \sqrt{5}) / 2$ and $\beta := \big(1 - \sqrt{5}) / 2$.
It is useful (proof of Lemma~\ref{lem:shifted} later) to extend the sequences of Fibonacci and Lucas numbers to negative indices using~\eqref{equ:binet}.
Let us define
\begin{equation}\label{equ:cyclotomic}
\Phi_n := \prod_{\substack{1 \,\leq\, k \,\leq\, n \\ \gcd(n, k) \,=\, 1}} \left(\alpha - \mathrm{e}^{\frac{2 \pi \mathbf{i} k}{n}} \beta \right) ,
\end{equation}
for each integer $n \geq 2$, and put $\Phi_1 := 1$.
It can be proved that each $\Phi_n$ is an integer~\cite[p.~428]{MR491445}.
Moreover, from \eqref{equ:binet} and \eqref{equ:cyclotomic} it follows that
\begin{equation}\label{equ:FnLnPhid}
F_n = \prod_{n \,\in\, \mathcal{D}(n)} \Phi_d \qquad\text{and}\qquad L_n = \prod_{n \,\in\, \mathcal{D}^\prime(n)} \Phi_d ,
\end{equation}
for every integer $n \geq 1$, where $\mathcal{D}(n) := \{d \in \mathbb{N} : d \mid n\}$ and $\mathcal{D}^\prime(n) := \mathcal{D}(2n) \setminus \mathcal{D}(n)$.
In~particular, using~\eqref{equ:FnLnPhid} one can prove by induction that $\Phi_n > 0$ for every integer $n \geq 1$.

We need the following two results about $\Phi_n$.

\begin{lem}\label{lem:gcdPhi}
For all integers $m > n \geq 1$ we have $\gcd(\Phi_m, \Phi_n) \mid m$.
\end{lem}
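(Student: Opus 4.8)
The plan is to work one prime at a time: it suffices to prove $v_p\bigl(\gcd(\Phi_m,\Phi_n)\bigr)\le v_p(m)$ for every prime $p$, since these inequalities together are equivalent to $\gcd(\Phi_m,\Phi_n)\mid m$ (recall $\Phi_m,\Phi_n>0$). We may assume $n\ge 2$, the case $n=1$ being trivial as $\Phi_1=1$, and we fix a prime $p$ dividing both $\Phi_m$ and $\Phi_n$.

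First I would pass to the ordinary cyclotomic polynomials $\Phi_m(x),\Phi_n(x)\in\mathbb Z[x]$. By \eqref{equ:cyclotomic} one has $\Phi_k=\beta^{\varphi(k)}\Phi_k(\alpha/\beta)$ for $k\ge 2$. Since $\Phi_m(x)$ and $\Phi_n(x)$ are distinct irreducibles, their resultant $R:=\operatorname{Res}\bigl(\Phi_m(x),\Phi_n(x)\bigr)$ is a nonzero integer lying in the ideal $\bigl(\Phi_m(x),\Phi_n(x)\bigr)$ of $\mathbb Z[x]$, say $R=u(x)\Phi_m(x)+v(x)\Phi_n(x)$ with $u,v\in\mathbb Z[x]$. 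Evaluating at $x=\alpha/\beta=-\alpha^2\in\mathbb Z[\alpha]$ (using $\beta^{-1}=-\alpha$) and observing that each $\Phi_k(\alpha/\beta)=\beta^{-\varphi(k)}\Phi_k$ is a unit multiple of $\Phi_k$ in $\mathbb Z[\alpha]$, I obtain $R=A\Phi_m+B\Phi_n$ with $A,B\in\mathbb Z[\alpha]$. Hence $\gcd(\Phi_m,\Phi_n)\mid R$ in $\mathbb Z[\alpha]$, and since $\mathbb Z[\alpha]\cap\mathbb Q=\mathbb Z$ the divisibility already holds in $\mathbb Z$.

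Next I would invoke the classical evaluation of cyclotomic resultants: for $m>n\ge 1$ one has $R=1$ unless $m/n$ is a prime power, in which case $R=p^{\varphi(n)}$ for the unique prime $p$ with $m/n=p^{\,k}$, $k\ge 1$. When $m/n$ is not a prime power we conclude $\gcd(\Phi_m,\Phi_n)=1$ and are done; otherwise the common prime $p$ is unique, $\gcd(\Phi_m,\Phi_n)$ is a power of $p$, and $p\mid m$ since $m=n\,p^{\,k}$ with $k\ge 1$.

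The remaining, and most delicate, task is to replace the weak bound $v_p(\gcd)\le\varphi(n)$ coming from the resultant by $v_p(\gcd)\le v_p(m)$. Let $z(p)$ be the rank of apparition of $p$, the least $r\ge 1$ with $p\mid F_r$; the standard criterion that $p\mid\Phi_d$ forces $d=z(p)p^{\,j}$ turns our hypotheses into $m=z(p)p^{\,a}$ and $n=z(p)p^{\,b}$ with $a>b\ge 0$. Writing $w_j:=v_p\bigl(\Phi_{z(p)p^{\,j}}\bigr)$, the factorization $F_{z(p)p^{\,c}}=\prod_{j=0}^{c}\Phi_{z(p)p^{\,j}}$ from \eqref{equ:FnLnPhid} lets me read off the $w_j$ from the $p$-adic valuations of Fibonacci numbers, and then $v_p(\gcd)=\min(w_a,w_b)$ must be compared with $v_p(m)=v_p(z(p))+a$. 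For odd $p\ne 5$ the lifting-the-exponent identity $v_p\bigl(F_{z(p)p^{\,c}}\bigr)=v_p\bigl(F_{z(p)}\bigr)+c$ telescopes to $w_j=1$ for all $j\ge 1$, giving $\min(w_a,w_b)\le 1\le v_p(m)$. The main obstacle is the careful handling of the two anomalous primes: for $p=5$, the prime with $(\alpha-\beta)^2=5$, the identity $v_5(F_n)=v_5(n)$ plays the role of lifting-the-exponent and again yields $w_j=1$; for $p=2$ one has the lone exception $w_1=v_2(\Phi_6)=2$, but then $m=6$ forces $n=3$ and $w_0=v_2(\Phi_3)=1$, so $\min(w_a,w_b)=1\le v_2(m)$. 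In every case $v_p(\gcd)\le v_p(m)$, and summing over $p$ gives $\gcd(\Phi_m,\Phi_n)\mid m$.
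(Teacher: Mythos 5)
Your proof is correct, but it takes a genuinely different and much more self-contained route than the paper, which disposes of the lemma in two lines by citing Stewart's Lemma~7 from \cite{MR491445} (for $m \geq 5$, $m \neq 6, 12$, $n \geq 3$, the gcd divides the greatest prime factor of $m/\gcd(3,m)$, hence divides $m$) and checking the leftover cases against the explicit values $\Phi_1=\Phi_2=1$, $\Phi_3=2$, $\Phi_4=3$, $\Phi_5=5$, $\Phi_6=4$, $\Phi_{12}=6$. You essentially reprove the substance of Stewart's lemma: the passage through $\Phi_k=\beta^{\varphi(k)}\Phi_k(\alpha/\beta)$ and the resultant is sound (the resultant lies in the ideal $\bigl(\Phi_m(x),\Phi_n(x)\bigr)$ of $\mathbb{Z}[x]$, $\beta$ is a unit of $\mathbb{Z}[\alpha]$, and $\mathbb{Z}[\alpha]\cap\mathbb{Q}=\mathbb{Z}$), and the rank-of-apparition plus lifting-the-exponent bookkeeping correctly yields $v_p(\gcd)=\min(w_a,w_b)\leq v_p(m)$, with the lone anomaly $w_1=v_2(\Phi_6)=2$ handled properly (if $a=1$ then $b=0$ and $w_0=1$). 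Two observations. First, your resultant step is logically redundant: the criterion that $p\mid\Phi_d$ forces $d=z(p)p^{\,j}$ already shows that any common prime divisor $p$ gives $m=z(p)p^{a}$, $n=z(p)p^{b}$ with $a>b\geq 0$, hence $m/n=p^{a-b}$ and $p\mid m$, so the entire first half could be deleted without loss. Second, your argument still outsources three standard but nontrivial inputs (Apostol's evaluation of cyclotomic resultants, the criterion $p\mid\Phi_d\Rightarrow d=z(p)p^{\,j}$, and the $p$-adic valuation formulas for $F_n$ including the $p=2$ and $p=5$ cases), so it is not really more elementary than citing Stewart outright; what it buys is a sharper conclusion---$\gcd(\Phi_m,\Phi_n)$ is $1$ or a power of a single prime, with an explicit bound on the exponent---and a transparent explanation of why $6$ and $12$ are the exceptional indices in Stewart's statement.
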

\begin{proof}
For $m \geq 5$, $m \neq 6, 12$, and $n \geq 3$, it is known~\cite[Lemma~7]{MR491445} that $\gcd(\Phi_m, \Phi_n)$ divides the greatest prime factor of $m /\!\gcd(3, m)$, and consequently it divides $m$.
The remaining cases follow easily since $\Phi_1 = \Phi_2 = 1$, $\Phi_3 = 2$, $\Phi_4 = 3$, $\Phi_5 = 5$, $\Phi_6 = 4$, and $\Phi_{12} = 6$.
\end{proof}

\begin{lem}\label{lem:logPhi}
For all integers $n \geq 1$, we have $\log \Phi_n = \varphi(n) \log \alpha + O(1)$.
\end{lem}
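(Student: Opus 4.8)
The plan is to take logarithms in the product \eqref{equ:cyclotomic} and peel off a main term proportional to $\varphi(n)$. Since $\Phi_n > 0$, taking absolute values turns the product into $\Phi_n = \prod_{\gcd(k,n)=1}\lvert \alpha - \mathrm e^{2\pi\mathbf{i} k/n}\beta\rvert$, so that $\log\Phi_n = \sum_{1\le k\le n,\,\gcd(k,n)=1}\log\lvert \alpha - \mathrm e^{2\pi\mathbf{i} k/n}\beta\rvert$. First I would use $\alpha\beta = -1$ to write $\alpha - \mathrm e^{2\pi\mathbf{i} k/n}\beta = \alpha\bigl(1 - w\,\mathrm e^{2\pi\mathbf{i} k/n}\bigr)$, where $w := \beta/\alpha$ has modulus $\lvert w\rvert = 1/\alpha^2 < 1$. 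Summing over the $\varphi(n)$ reduced residues then gives
\begin{equation*}
\log\Phi_n = \varphi(n)\log\alpha + S_n,\qquad S_n := \sum_{\substack{1\le k\le n\\ \gcd(k,n)=1}}\log\bigl\lvert 1 - w\,\mathrm e^{2\pi\mathbf{i} k/n}\bigr\rvert,
\end{equation*}
so everything reduces to proving that $S_n = O(1)$ with an absolute implied constant.

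The difficulty is that each individual summand in $S_n$ is only of size $O(1)$, so a term-by-term estimate would yield the useless bound $O(\varphi(n))$; genuine cancellation among the $\varphi(n)$ terms is needed. To extract it I would exploit the factorization $\prod_{k=1}^{n}\bigl(1 - w\,\mathrm e^{2\pi\mathbf{i} k/n}\bigr) = 1 - w^n$, which follows by evaluating $\prod_{k=0}^{n-1}(X - \mathrm e^{2\pi\mathbf{i} k/n}) = X^n - 1$ at $X = 1/w$ and clearing denominators. Taking $\log\lvert\cdot\rvert$ gives $\sum_{k=1}^{n}\log\lvert 1 - w\,\mathrm e^{2\pi\mathbf{i} k/n}\rvert = \log\lvert 1 - w^n\rvert$. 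Now grouping the $n$-th roots of unity according to their exact order $d \mid n$, and recognizing the contribution of the primitive $d$-th roots as $S_d$, I get $\log\lvert 1 - w^n\rvert = \sum_{d\mid n} S_d$. Möbius inversion then yields the clean closed form
\begin{equation*}
S_n = \sum_{d\mid n}\mu(n/d)\,\log\bigl\lvert 1 - w^{\,d}\bigr\rvert .
\end{equation*}

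Finally I would bound this uniformly in $n$. Since $\lvert w\rvert < 1$, the standard estimate $\bigl\lvert\log\lvert 1 - z\rvert\bigr\rvert \le -\log(1 - \lvert z\rvert)$ for $\lvert z\rvert<1$ gives $\bigl\lvert\log\lvert 1 - w^{d}\rvert\bigr\rvert \le \lvert w\rvert^{d}/(1 - \lvert w\rvert)$, whence
\begin{equation*}
\lvert S_n\rvert \le \sum_{d\mid n}\bigl\lvert\log\lvert 1 - w^{d}\rvert\bigr\rvert \le \frac{1}{1-\lvert w\rvert}\sum_{d=1}^{\infty}\lvert w\rvert^{d} = \frac{\lvert w\rvert}{(1-\lvert w\rvert)^2},
\end{equation*}
a constant depending only on $\alpha$. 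This proves $S_n = O(1)$, and hence $\log\Phi_n = \varphi(n)\log\alpha + O(1)$. The one step that requires care is the passage from the sum over reduced residues to the closed form via the cyclotomic product identity and Möbius inversion; once that is in hand, the uniform $O(1)$ bound is routine, and the small cases $n\in\{1,2\}$ (where \eqref{equ:cyclotomic} is read with $\Phi_1 = 1$) are absorbed harmlessly into the error term.
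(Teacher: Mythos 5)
Your proof is correct. The paper itself does not prove this lemma at all: it simply cites \cite[Lemma~2.1(iii)]{MR4003803}, so any self-contained argument is necessarily ``different'' from what appears here. Your route --- factoring out $\alpha$ from each term of \eqref{equ:cyclotomic} to isolate $\varphi(n)\log\alpha$, then controlling the remainder $S_n = \sum_{\gcd(k,n)=1}\log\lvert 1 - w\,\mathrm{e}^{2\pi\mathbf{i}k/n}\rvert$ via the identity $\prod_{k=1}^{n}(1-w\,\mathrm{e}^{2\pi\mathbf{i}k/n}) = 1-w^n$ and M\"obius inversion --- is sound: the closed form $S_n=\sum_{d\mid n}\mu(n/d)\log\lvert 1-w^d\rvert$ is right, and the crucial point that a termwise bound only gives $O(\varphi(n))$ while the geometric-series bound $\sum_{d\mid n}\lvert w\rvert^d/(1-\lvert w\rvert)\le \lvert w\rvert/(1-\lvert w\rvert)^2$ gives a uniform constant is exactly the content of the cited lemma. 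This is essentially the standard proof (equivalently one can write $\Phi_n=\prod_{d\mid n}F_{n/d}^{\mu(d)}$ and expand $\log F_m = m\log\alpha - \log\sqrt{5} + \log\lvert 1-w^m\rvert$, which leads to the same remainder sum), and your handling of the edge case $n=1$, where $\Phi_1$ is defined to be $1$ rather than by the product, is appropriately absorbed into the $O(1)$. What your write-up buys is self-containedness at the cost of a page; what the citation buys is brevity.
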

\begin{proof}
See, e.g.,~\cite[Lemma~2.1(iii)]{MR4003803}.
\end{proof}

The next lemma belongs to the folklore and provides a way to write shifted Fibonacci numbers as products of Fibonacci and Lucas numbers.

\begin{lem}\label{lem:shifted}
For every integer $k$, we have
\begin{alignat*}{4}
F_{4k + 1} - 1 &= F_{2k} L_{2k + 1},     \qquad & F_{4k + 1} + 1 &= F_{2k + 1} L_{2k},     \\
F_{4k + 2} - 1 &= F_{2k} L_{2k + 2},     \qquad & F_{4k + 2} + 1 &= F_{2k + 2} L_{2k},     \\
F_{4k + 3} - 1 &= F_{2k + 2} L_{2k + 1}, \qquad & F_{4k + 3} + 1 &= F_{2k + 1} L_{2k + 2}, \\
F_{4k + 4} - 1 &= F_{2k + 3} L_{2k + 1}, \qquad & F_{4k + 4} + 1 &= F_{2k + 1} L_{2k + 3}.
\end{alignat*}
\end{lem}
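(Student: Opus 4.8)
The plan is to reduce all eight identities to a single product-to-sum formula and then verify each line by a short parity bookkeeping. First I would record the elementary facts $\alpha\beta = -1$, $\alpha + \beta = 1$, and $\alpha - \beta = \sqrt{5}$, which follow at once from the definitions of $\alpha$ and $\beta$. Using these together with Binet's formulas~\eqref{equ:binet}, taken for all integer indices, I would establish the master identity
\begin{equation*}
F_m L_n = F_{m+n} + (-1)^n F_{m-n}
\end{equation*}
valid for all integers $m$ and $n$. The derivation is direct: expanding the numerator of $F_m L_n = (\alpha^m - \beta^m)(\alpha^n + \beta^n)/(\alpha - \beta)$ produces $\alpha^{m+n} - \beta^{m+n}$ together with the cross terms $\alpha^m\beta^n - \alpha^n\beta^m = (\alpha\beta)^n(\alpha^{m-n} - \beta^{m-n}) = (-1)^n(\alpha^{m-n} - \beta^{m-n})$, and dividing through by $\alpha - \beta$ turns these into $F_{m+n}$ and $(-1)^n F_{m-n}$, respectively.

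Next I would compute the handful of extended Fibonacci values that appear as the ``shift'' on the right-hand side. From Binet's formula and the relations $\alpha^{-1} = -\beta$, $\beta^{-1} = -\alpha$ one obtains $F_{-1} = F_1 = F_2 = 1$ and $F_{-2} = -1$. With the master identity in hand, each of the eight claimed formulas becomes a matter of matching $m + n$ to the subscript $4k + j$ and checking that $(-1)^n F_{m-n}$ equals the required $\mp 1$. For instance, taking $m = 2k$ and $n = 2k+1$ gives $m + n = 4k+1$, $m - n = -1$, and $(-1)^n F_{m-n} = -F_{-1} = -1$, yielding $F_{2k}L_{2k+1} = F_{4k+1} - 1$; the remaining seven lines are confirmed identically, the only data entering being the parity of $n$ and whether $m - n$ lies in $\{\pm 1, \pm 2\}$.

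I do not expect any genuine obstacle here, since everything reduces to the one algebraic identity. The only point demanding care is the sign bookkeeping: one must track both the factor $(-1)^n$ and the sign of the extended value $F_{m-n}$ (in particular $F_{-2} = -1$ as against $F_{2} = 1$), since a slip in either would flip a $+1$ into a $-1$. Arranging the eight cases in a small table indexed by the residue $j \in \{1,2,3,4\}$ of the subscript modulo $4$ and by the sign of the shift renders the verification completely mechanical.
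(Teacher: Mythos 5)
Your proposal is correct and follows essentially the same route as the paper: both derive the identity $F_a L_b = F_{a+b} + (-1)^b F_{a-b}$ from Binet's formulas and $\alpha\beta = -1$, then specialize to the eight cases using the extended values $F_{-1} = 1$ and $F_{-2} = -1$. The sign bookkeeping you describe (e.g.\ $m = 2k$, $n = 2k+1$ giving $F_{4k+1} - 1 = F_{2k}L_{2k+1}$) checks out.
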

\begin{proof}
Employing~\eqref{equ:binet} and $\alpha \beta = -1$, a quick algebraic manipulation yields
\begin{equation}\label{equ:sumdiff}
F_{a + b} + (-1)^b F_{a - b} = F_a L_b ,
\end{equation}
for all integers $a, b$.
Each of the eight identities corresponds to a particular choice of~$a,b$ in~\eqref{equ:sumdiff}, noting that $F_{-1} = 1$ and $F_{-2} = -1$.
\end{proof}

Finally, we need a lemma about the greatest common divisor of a Fibonacci number and a Lucas number.

\begin{lem}\label{lem:gcdFmLn}
For all integers $m, n$, we have that $\gcd(F_m, L_n)$ is equal to $1$, $2$, or $L_{\gcd(m, n)}$.
\end{lem}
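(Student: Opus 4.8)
The plan is to reduce everything to the classical identities $\gcd(F_i, F_j) = F_{\gcd(i,j)}$, the factorization $F_{2n} = F_n L_n$, and the elementary bound $\gcd(F_n, L_n) \mid 2$ (which follows from $L_n^2 - 5 F_n^2 = \pm 4$: if $c := \gcd(F_n, L_n)$ then $c^2$ divides $L_n^2 - 5 F_n^2 = \pm 4$, whence $c \mid 2$). Set $d := \gcd(m, n)$ and $D := \gcd(m, 2n)$, and write $g := \gcd(F_m, L_n)$; I may assume $m, n \geq 0$, since $F_{-k}$ and $L_{-k}$ differ from $F_k$ and $L_k$ only by sign. Using $L_n \mid F_{2n}$, the first step is $g \mid \gcd(F_m, F_{2n}) = F_D$. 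Writing $\nu_2$ for the $2$-adic valuation, a one-line computation on odd parts and $\nu_2$ shows that $D = d$ when $\nu_2(m) \leq \nu_2(n)$ and $D = 2d$ when $\nu_2(m) > \nu_2(n)$, so the argument splits along these two cases. In the first case $g \mid F_D = F_d \mid F_n$ (as $d \mid n$), hence $g \mid \gcd(F_n, L_n) \mid 2$ and $g \in \{1, 2\}$.

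In the second case, $g \mid F_D = F_{2d} = F_d L_d$. Here $n / d$ is odd and $2d \mid m$ (both because $\nu_2(d) = \nu_2(n)$ while $\nu_2(m) \geq \nu_2(n) + 1$), so the divisibility rules $L_d \mid L_n$ and $L_d \mid F_{2d} \mid F_m$ give $L_d \mid g$. Writing $g = h L_d$, cancellation in $h L_d \mid F_d L_d$ yields $h \mid F_d$; combined with $h \mid g \mid L_n$ and $\gcd(L_n, F_d) \mid \gcd(L_n, F_n) \mid 2$ (using $F_d \mid F_n$), this forces $h \mid 2$, so $g$ is either $L_d$ or $2 L_d$. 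To exclude the latter, suppose $h = 2$. Then $2 \mid F_d$, which forces $3 \mid d$ (since $2 \mid F_k \iff 3 \mid k$), and also $\nu_2(g) = \nu_2(L_d) + 1$. But $3 \mid d \mid n$ together with $\nu_2(n) = \nu_2(d)$ gives $n \equiv d \pmod 6$, and since $\nu_2(L_k)$ depends only on $k \bmod 6$ we obtain $\nu_2(L_n) = \nu_2(L_d)$, contradicting $\nu_2(g) \leq \nu_2(L_n)$ (from $g \mid L_n$). Hence $h = 1$ and $g = L_d = L_{\gcd(m,n)}$.

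I expect the genuine obstacle to be exactly this $2$-adic bookkeeping. The odd part of $g$ matches $L_{\gcd(m,n)}$ almost automatically from $F_{2d} = F_d L_d$ and $\gcd(F_n, L_n) \mid 2$, but pinning down the precise power of $2$ requires the explicit behaviour of $\nu_2(F_k)$ and $\nu_2(L_k)$ modulo $6$; everything else is routine manipulation of the standard divisibility properties of Fibonacci and Lucas numbers.
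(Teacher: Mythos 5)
Your proof is correct, but there is essentially nothing in the paper to compare it against: the paper's entire ``proof'' of this lemma is a citation to McDaniel's 1991 article on g.c.d.'s in Lucas sequences, so you have supplied an argument where the author supplies a reference. Your derivation is a sound, self-contained route from standard facts --- $\gcd(F_i,F_j)=F_{\gcd(i,j)}$, $F_{2k}=F_kL_k$, $L_k^2-5F_k^2=4(-1)^k$ (hence $\gcd(F_k,L_k)\mid 2$), $L_a\mid L_{ab}$ for odd $b$, and the behaviour of $\nu_2(F_k)$ and $\nu_2(L_k)$ --- and the case split on $\nu_2(m)$ versus $\nu_2(n)$ in fact recovers the sharper statement McDaniel proves: the g.c.d.\ equals $L_{\gcd(m,n)}$ precisely when $\nu_2(m)>\nu_2(n)$ and lies in $\{1,2\}$ otherwise. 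The one ingredient you should state more explicitly is the fact that $\nu_2(L_k)$ depends only on $k\bmod 6$ (it equals $2$ for $k\equiv 3\pmod 6$, equals $1$ for $k\equiv 0\pmod 6$, and equals $0$ when $3\nmid k$, as one sees from the period-$12$ behaviour of $L_k\bmod 8$); this is standard but it is doing the real work in excluding the value $2L_{\gcd(m,n)}$, which is exactly the ``$2$-adic bookkeeping'' you identify as the genuine obstacle. What the citation buys the paper is brevity; what your argument buys is self-containedness and the precise dichotomy, which also makes it immediate that the Fibonacci and Lucas factors paired in the shifted-Fibonacci factorizations (whose indices differ by an odd number) have g.c.d.\ at most $2$.
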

\begin{proof}
See~\cite{MR1089516}.
\end{proof}

\section{Further preliminaries}

For every sequence $\mathbf{s} = (s_n)_{n \geq 1}$ in $\{-1, +1\}$ and for every integer $n \geq 5$, define
\begin{equation*}
\ell_{\mathbf{s}}(n) = \lcm(F_5 + s_5, \dots, F_n + s_n) .
\end{equation*}
(Starting from $F_5$ instead of $F_3$ does not affect the asymptotic and simplifies a bit the next arguments.)
Furthermore, define the sets
\begin{alignat*}{3}
\mathcal{F}_{\mathbf{s}}(n) := \big\{h \in [2,\tfrac{n}{2}] :\phantom{M} s_{2h - 2} &= (-1)^h &\;\lor\; s_{2h - 1} &= (-1)^{h+1} \\
 \;\lor\; s_{2h + 1} &= (-1)^{h+1} &\;\lor\; s_{2h + 2} &= (-1)^{h+1} \big\} , \\[0.5em]
\mathcal{L}_{\mathbf{s}}(n) := \big\{h \in [2,\tfrac{n}{2}] :\phantom{M} s_{2h - 2} &= (-1)^{h+1} &\;\lor\; s_{2h - 1} &= (-1)^h \\
 \;\lor\; s_{2h + 1} &= (-1)^h &\;\lor\; s_{2h + 2} &= (-1)^h \big\} ,
\end{alignat*}
and
\begin{equation*}
\mathcal{M}_{\mathbf{s}}(n) := \bigcup_{h \,\in\, \mathcal{F}_{\mathbf{s}}(n)} \mathcal{D}(h) \;\cup\; \bigcup_{h \,\in\, \mathcal{L}_{\mathbf{s}}(n)} \mathcal{D}^\prime(h) .
\end{equation*}
The next lemma is the key to the proofs of Theorem~\ref{thm:periodic} and Theorem~\ref{thm:random}.

\begin{lem}\label{lem:lcm}
As $n \to +\infty$, we have
\begin{equation*}
\log \ell_{\mathbf{s}}(n) = \sum_{d \,\in\, \mathcal{M}_{\mathbf{s}}(n)} \varphi(d) \log \alpha + O\!\left(\frac{n^2}{\log n}\right) .
\end{equation*}
\end{lem}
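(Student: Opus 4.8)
The plan is to peel the problem apart in three stages: first rewrite each shifted Fibonacci number as a product of a Fibonacci and a Lucas number, then replace the l.c.m.\ of shifted numbers by an l.c.m.\ of genuine Fibonacci and Lucas numbers, and finally pass to the nearly pairwise-coprime cyclotomic factors $\Phi_d$. I would begin by applying Lemma~\ref{lem:shifted} to write $F_k + s_k = F_{a_k} L_{b_k}$, where the pair $(a_k, b_k)$ is read off from $k \bmod 4$ and the sign $s_k$. In each of the eight cases one checks that $\gcd(a_k, b_k) \in \{1, 2\}$, so Lemma~\ref{lem:gcdFmLn} gives $g_k := \gcd(F_{a_k}, L_{b_k}) \in \{1, 2, L_1, L_2\} = \{1,2,3\}$, a divisor of $6$. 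A second, purely bookkeeping, step records which indices occur: inspecting the same eight cases, split according to the parity of the target index $h$, the four ways a given $F_h$ can arise as some $F_{a_k}$ translate exactly into the four-clause disjunction defining $\mathcal{F}_{\mathbf{s}}(n)$, and likewise for $L_h$ and $\mathcal{L}_{\mathbf{s}}(n)$. Thus, as $k$ runs over $[5,n]$, the multiset $\{a_k\}$ fills out $\mathcal{F}_{\mathbf{s}}(n)$ and $\{b_k\}$ fills out $\mathcal{L}_{\mathbf{s}}(n)$, up to boundedly many boundary indices near $h=2$ (which is exactly why starting the l.c.m.\ at $F_5$ is harmless).

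Set $\tilde{\ell} := \lcm\big(\{F_h : h \in \mathcal{F}_{\mathbf{s}}(n)\} \cup \{L_h : h \in \mathcal{L}_{\mathbf{s}}(n)\}\big)$. The reduction to $\tilde{\ell}$ is a short divisibility squeeze that absorbs the troublesome small primes into a single bounded factor. On one hand $\lcm(F_{a_k}, L_{b_k}) \mid F_{a_k} L_{b_k} = F_k + s_k$, so $\tilde{\ell} \mid \ell_{\mathbf{s}}(n)$. On the other hand $F_k + s_k = g_k \lcm(F_{a_k}, L_{b_k})$ with $g_k \mid 6$ and $\lcm(F_{a_k}, L_{b_k}) \mid \tilde{\ell}$, whence $F_k + s_k \mid 6\,\tilde{\ell}$ for every $k$, and therefore $\ell_{\mathbf{s}}(n) \mid 6\,\tilde{\ell}$. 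Consequently $\log \ell_{\mathbf{s}}(n) = \log \tilde{\ell} + O(1)$, and no separate analysis of the primes $2$ and $3$ is needed at this stage.

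Next I would move to the cyclotomic factors through \eqref{equ:FnLnPhid}. Writing $P := \prod_{d \in \mathcal{M}_{\mathbf{s}}(n)} \Phi_d$, the definition of $\mathcal{M}_{\mathbf{s}}(n)$ yields the chain $\lcm_{d \in \mathcal{M}_{\mathbf{s}}(n)} \Phi_d \mid \tilde{\ell} \mid P$: the right divisibility holds because each $F_h = \prod_{d \mid h} \Phi_d$ and each $L_h = \prod_{d \in \mathcal{D}'(h)} \Phi_d$ is a subproduct of $P$, and the left one because every $\Phi_d$ with $d \in \mathcal{M}_{\mathbf{s}}(n)$ divides one of these $F_h$ or $L_h$. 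Hence $\log \tilde{\ell}$ is squeezed between $\log \lcm_{d} \Phi_d$ and $\log P$, so it remains only to control the redundancy $R := \log P - \log \lcm_{d \in \mathcal{M}_{\mathbf{s}}(n)} \Phi_d \ge 0$. Since $\mathcal{M}_{\mathbf{s}}(n) \subseteq [1,n]$, Lemma~\ref{lem:logPhi} gives $\log P = \sum_{d \in \mathcal{M}_{\mathbf{s}}(n)} \varphi(d) \log \alpha + O(|\mathcal{M}_{\mathbf{s}}(n)|) = \sum_{d \in \mathcal{M}_{\mathbf{s}}(n)} \varphi(d) \log \alpha + O(n)$, and the lemma follows once I show $R = O(n^2/\log n)$.

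The main obstacle is precisely this bound on $R$, i.e.\ showing that the $\Phi_d$ are close enough to pairwise coprime, and here Lemma~\ref{lem:gcdPhi} is decisive. Writing $R = \sum_p \mathrm{excess}_p \cdot \log p$ with $\mathrm{excess}_p := \sum_{d \in \mathcal{M}_{\mathbf{s}}(n)} v_p(\Phi_d) - \max_{d \in \mathcal{M}_{\mathbf{s}}(n)} v_p(\Phi_d)$, I would first dispose of any fixed prime $p$: summing $\sum_{d \mid m} v_p(\Phi_d) = v_p(F_m)$ over $m \le n$ shows $\sum_{d \le n} v_p(\Phi_d) = O_p(n)$, so each of $p = 2,3,5$ contributes only $O(n)$. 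For $p \ge 7$, Lemma~\ref{lem:gcdPhi} forces the indices $d$ with $p \mid \Phi_d$ to form a single chain $\rho, \rho p, \rho p^2, \dots$ over the rank of apparition $\rho = \rho(p)$ (if $p \mid \Phi_d$ and $p \mid \Phi_{d'}$ with $d' < d$, then $p \mid d$), the imprimitive terms carrying $p$-valuation $1$; dropping the single largest term leaves $\mathrm{excess}_p \le \#\{j \ge 1 : \rho p^j \in \mathcal{M}_{\mathbf{s}}(n)\}$. Summing, each such $\rho p^j = d$ forces $p \mid d$, so $\sum_{p \ge 7} \mathrm{excess}_p \log p \le \sum_{d \in \mathcal{M}_{\mathbf{s}}(n)} \sum_{p \mid d} \log p \le \sum_{d \in \mathcal{M}_{\mathbf{s}}(n)} \log d = O(n \log n)$. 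Altogether $R = O(n \log n)$, comfortably inside the required $O(n^2/\log n)$; the only delicate point is the primitive-divisor chain structure for $p \ge 7$, which is exactly the content encoded by Lemma~\ref{lem:gcdPhi}.
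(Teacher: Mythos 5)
Your overall architecture --- shifted term $\to$ Fibonacci times Lucas via Lemma~\ref{lem:shifted}, $\to$ the factors $\Phi_d$ via \eqref{equ:FnLnPhid}, $\to$ $\sum\varphi(d)\log\alpha$ via Lemma~\ref{lem:logPhi} --- is the same as the paper's, and your divisibility squeeze $\tilde\ell\mid\ell_{\mathbf s}(n)\mid 6\,\tilde\ell$ is a clean variant of the paper's $O(1)$ reduction. The gap is in your bound on the redundancy $R$ for $p\ge 7$. Lemma~\ref{lem:gcdPhi} gives only this: if $p\mid\Phi_d$ and $p\mid\Phi_{d'}$ with $d'<d$, then $p\mid d$. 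It does \emph{not} give that the indices with $p\mid\Phi_d$ form the single chain $\rho,\rho p,\rho p^2,\dots$, and it certainly does not give that the imprimitive terms carry $p$-valuation $1$ --- yet that last assertion is exactly what your estimate $\mathrm{excess}_p\le\#\{j\ge1:\rho p^j\in\mathcal M_{\mathbf s}(n)\}$ and the ensuing bound $\sum_{d}\sum_{p\mid d}\log p\le\sum_d\log d$ rest on. Without a bound on $v_p(\Phi_d)$ for the non-maximal indices, $\mathrm{excess}_p$ could a priori be as large as $\sum_{d\ne d^*}v_p(\Phi_d)$ with each summand of size up to $\varphi(d)\log\alpha/\log p$, and the argument does not close. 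The valuation-one property is a true classical fact (lifting the exponent for Lucas sequences, essentially in Stewart's paper), but it is not among the quoted lemmas, so as written this step is unsupported; the same remark applies to your $p\in\{2,3,5\}$ estimate $\sum_{m\le n}v_p(F_m)=O_p(n)$, which silently uses the law of apparition.

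There is a softer route, which is the one the paper takes and which needs nothing beyond the stated lemmas: split the primes at $n$. For $p>n$, Lemma~\ref{lem:gcdPhi} already gives \emph{exact} coprimality at $p$ of the $\Phi_d$ with $d\in\mathcal M_{\mathbf s}(n)\subseteq[2,n]$, since $\gcd(\Phi_{d_1},\Phi_{d_2})\mid d_1\le n<p$; so these primes contribute zero excess. For $p\le n$ one does not bound the excess at all: every prime power $p^v$ exactly dividing any of the quantities in play is at most $2^n$, and there are $O(n/\log n)$ such primes, so their entire contribution is $O(n^2/\log n)$ and can be discarded wholesale. If you prefer to keep your finer excess analysis, you must either import the rank-of-apparition and valuation facts explicitly, or retreat to what Lemma~\ref{lem:gcdPhi} actually yields, namely $v_p(\Phi_d)=v_p\big(\!\gcd(\Phi_d,\Phi_{d^*})\big)\le v_p\big(\!\max(d,d^*)\big)\le\log n/\log p$ for every $d$ not achieving the maximal valuation; combined with the fact that all but one of these indices are multiples of $p$, this still gives $R=O(n\log n\log\log n)$, but it is not the argument you wrote.
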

\begin{proof}
Assume $n \geq 8$ and let $n = 4K + 4$, for some real number $K \geq 1$.
Using Lemma~\ref{lem:shifted}, we can write each $F_i + s_i$ ($i=5,\dots,n$) as a product of a Fibonacci number and a Lucas number, which, in light of Lemma~\ref{lem:gcdFmLn}, have a greatest common divisor not exceeding $3$.
Therefore,
\begin{equation}\label{equ:logelln1}
\log \ell_{\mathbf{s}}(n) = \log\lcm\!\left(\lcm_{i \,\in\, \mathcal{F}_{\mathbf{s}}^\prime(n)} F_i, \lcm_{j \,\in\, \mathcal{L}_{\mathbf{s}}^\prime(n)} L_j \right) + O(1) ,
\end{equation}
where $\mathcal{F}_{\mathbf{s}}^\prime(n), \mathcal{L}_{\mathbf{s}}^\prime(n) \subseteq [2, 2K + 3] \cap \mathbb{Z}$ are defined by
\begin{align}\label{equ:calFcalL}
2k \in \mathcal{F}_{\mathbf{s}}^\prime(n) \quad\Longleftrightarrow\quad &\big((1 \leq k \leq K) \;\land\; (s_{4k + 1} = -1 \;\lor\; s_{4k + 2} = -1) \big) \\
 &\phantom{M} \lor\; \big((2 \leq k \leq K + 1) \;\land\; (s_{4k - 1} = - 1 \;\lor\; s_{4k - 2} = +1)\big) , \nonumber\\[0.3em]
2k + 1 \in \mathcal{F}_{\mathbf{s}}^\prime(n) \quad\Longleftrightarrow\quad &\big((1 \leq k \leq K) \;\land\; (s_{4k + 1} = +1 \;\lor\; s_{4k + 3} = +1 \;\lor\; s_{4k + 4} = +1)\big) \nonumber\\
 &\phantom{M} \lor\; \big((2 \leq k \leq K + 1) \;\land\; s_{4k} = - 1\big) , \nonumber\\[0.3em]
2k \in \mathcal{L}_{\mathbf{s}}^\prime(n) \quad\Longleftrightarrow\quad &\big((1 \leq k \leq K) \;\land\; (s_{4k+1} = +1 \;\lor\; s_{4k + 2} = + 1)\big) \nonumber\\
 &\phantom{M} \lor\; \big((2 \leq k \leq K + 1) \;\land\; (s_{4k - 2} = - 1 \;\lor\; s_{4k - 1} = +1)\big) , \nonumber\\[0.3em]
2k + 1 \in \mathcal{L}_{\mathbf{s}}^\prime(n) \quad\Longleftrightarrow\quad &\big((1 \leq k \leq K) \;\land\; (s_{4k + 1} = -1 \;\lor\; s_{4k + 3} = -1 \;\lor\; s_{4k + 4} = -1)\big) \nonumber\\
 &\phantom{M} \lor\; \big((2 \leq k \leq K + 1) \;\land\; s_{4k} = +1\big) , \nonumber
\end{align}
for every integer $k \in [1, K + 1]$.
Since $F_i, L_i \leq 2^i$ for every integer $i \geq 1$, replacing all the bounds on $k$ in~\eqref{equ:calFcalL} with $2 \leq k \leq n / 4$ amount to an error at most $O(n)$ in~\eqref{equ:logelln1}, that is,
\begin{equation}\label{equ:logelln2}
\log \ell_{\mathbf{s}}(n) = \log\lcm\!\left(\lcm_{i \,\in\, \mathcal{F}_{\mathbf{s}}(n)} F_i, \lcm_{j \,\in\, \mathcal{L}_{\mathbf{s}}(n)} L_j \right) + O(n) .
\end{equation}

Suppose that $p^v \mid\mid \ell_\mathbf{s}(n)$, for some prime number $p \leq n$ and some integer $v \geq 1$.
Then $p^v \mid F_i + s_i$ for some integer $i \in [5, n]$, and consequently $p^v \leq F_n + 1 \leq 2^n$.
Hence,
\begin{equation}\label{equ:smallprimes}
\log\Big(\prod_{\substack{p^v \,\mid\mid\, \ell_n \\ p \,\leq\, n}} p^v \Big) \leq \log\Big(\prod_{\substack{p^v \,\mid\mid\, \ell_n \\ p \,\leq\, n}} 2^n \Big) \leq \#\{p : p \leq n\} \cdot n \cdot \log 2 = O\!\left(\frac{n^2}{\log n}\right) ,
\end{equation}
since the number of primes not exceeding $x$ is $O(x / \log x)$.

Writing each $F_i$, $L_j$ in~\eqref{equ:logelln2} as a product of $\Phi_d$'s using~\eqref{equ:FnLnPhid}, and taking into account Lemma~\ref{lem:gcdPhi} and~\eqref{equ:smallprimes}, we obtain that

\begin{equation*}
\log \ell_{\mathbf{s}}(n) = \log\prod_{d \,\in\, \mathcal{M}_{\mathbf{s}}(n)} \Phi_d + O\!\left(\frac{n^2}{\log n}\right) .
\end{equation*}
Hence, by Lemma~\ref{lem:logPhi}, we get
\begin{equation*}
\log \ell_{\mathbf{s}}(n) = \sum_{d \,\in\, \mathcal{M}_{\mathbf{s}}(n)} \log \Phi_d + O\!\left(\frac{n^2}{\log n}\right) = \sum_{d \,\in\, \mathcal{M}_{\mathbf{s}}(n)} \varphi(d) \log \alpha + O\!\left(\frac{n^2}{\log n}\right) ,
\end{equation*}
since $\mathcal{M}_{\mathbf{s}}(n) \subseteq [2, n]$ and consequently $\#\mathcal{M}_{\mathbf{s}}(n) \leq n$.
\end{proof}

For all integers $r \geq 0$ and $m \geq 1$, and for every $x \geq 1$, let us define
\begin{equation*}
\mathcal{A}_{r,m}(x) := \{n \leq x : n \equiv r \!\!\!\!\pmod m\} .
\end{equation*}
We need two lemmas about unions of $\mathcal{D}(n)$, respectively $\mathcal{D}^\prime(n)$, with $n \in \mathcal{A}_{r,m}(x)$.

\begin{lem}\label{lem:Dn}
Let $r, m$ be positive integers and let $\mathcal{S}$ be the set of $s \in \{1, \dots, m\}$ such that there exists an integer $t \geq 1$ satisfying $s t \equiv r \pmod m$.
For each $s \in \mathcal{S}$, let $t(s)$ be the minimal $t$.
Then, for all $x \geq 1$, we have
\begin{equation*}
\bigcup_{n \,\in\, \mathcal{A}_{r,m}(x)} \mathcal{D}(n) = \bigcup_{s \,\in\, \mathcal{S}} \mathcal{A}_{s, m}\!\left(\frac{x}{t(s)}\right) .
\end{equation*}
\end{lem}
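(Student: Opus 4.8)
The plan is to establish the set equality by a direct double inclusion, tracing a generic positive integer $d$ through both sides. First I would unfold the left-hand side: by the definitions of $\mathcal{D}$ and $\mathcal{A}_{r,m}$, the integer $d$ lies in $\bigcup_{n \in \mathcal{A}_{r,m}(x)} \mathcal{D}(n)$ precisely when there is an integer $n$ with $d \mid n$, $n \leq x$, and $n \equiv r \pmod m$. Setting $t := n/d$, this is the same as asking for an integer $t \geq 1$ with $d t \leq x$ and $d t \equiv r \pmod m$.

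The key observation is that the congruence $d t \equiv r \pmod m$ involves $d$ only through its residue $s := (d \bmod m) \in \{1, \dots, m\}$, because $d t \equiv s t \pmod m$. Hence a suitable $t$ exists for some admissible choice if and only if $s \in \mathcal{S}$, and then the admissible values of $t$ are exactly the solutions of $s t \equiv r \pmod m$, whose minimum is $t(s)$. The crux is a brief monotonicity remark on the size constraint $d t \leq x$: since $t(s) \leq t$ for every admissible $t$, we have $d\, t(s) \leq d t$, so $d t \leq x$ holds for some admissible $t$ if and only if it holds for the minimal one, i.e. if and only if $d\, t(s) \leq x$, that is $d \leq x / t(s)$. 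Combining these equivalences, membership of $d$ in the left-hand side is equivalent to the existence of $s \in \mathcal{S}$ with $d \equiv s \pmod m$ and $d \leq x/t(s)$, which is exactly $d \in \mathcal{A}_{s,m}(x/t(s))$ for that $s$; this is membership in the right-hand union.

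Strictly, the chain of equivalences above already proves both inclusions, but to make the reverse direction explicit I would argue as follows: given $s \in \mathcal{S}$ and $d \in \mathcal{A}_{s,m}(x/t(s))$, the integer $n := d\, t(s)$ satisfies $d \mid n$, $n \leq x$, and $n \equiv s\, t(s) \equiv r \pmod m$, so $d$ divides an element of $\mathcal{A}_{r,m}(x)$ and thus belongs to the left-hand side. I expect the only delicate point to be the bookkeeping of residue representatives in $\{1, \dots, m\}$, and in particular the class $s = m$ (the case $m \mid d$): this $s$ lies in $\mathcal{S}$ exactly when $m \mid r$, which matches the fact that a multiple of $m$ can divide an integer $n \equiv r \pmod m$ only when $m \mid r$. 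Apart from this, the argument is a routine element chase.
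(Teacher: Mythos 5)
Your proposal is correct and follows essentially the same route as the paper's proof: an element chase in which the forward inclusion uses $t \geq t(s)$ to deduce $d \leq x/t(s)$, and the reverse inclusion takes $n := d\,t(s)$. The only difference is presentational (you phrase the two inclusions as a single chain of equivalences and add the remark on the representative $s = m$), so there is nothing to fix.
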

\begin{proof}
On the one hand, let $n \in \mathcal{A}_{r, m}(x)$ and pick $d \in \mathcal{D}(n)$.
Clearly, $n = dt$ for some integer $t \geq 1$.
Let $s \in \{1, \dots, m\}$ such that $d \equiv s \pmod m$.
Then $s t \equiv d t \equiv n \equiv r \pmod m$ and consequently $s \in \mathcal{S}$ and $t \geq t(s)$.
Therefore, $d = n / t \leq x / t(s)$, so that $d \in \mathcal{A}_{s, m}(x / t(s))$.

On the other hand, suppose that $d \in \mathcal{A}_{s, m}(x / t(s))$ for some $s \in \mathcal{S}$.
Letting $n := dt(s)$, we have $n \equiv s t(s) \equiv r \pmod m$ and $n \leq x$, that is, $n \in \mathcal{A}_{r,m}(x)$.
Finally, $d \in \mathcal{D}(n)$.
\end{proof}

\begin{lem}\label{lem:Dnprime}
Let $r, m$ be positive integers and let $\mathcal{S}$ be the set of $s \in \{1, \dots, m\}$ such that there exists an odd integer $t \geq 1$ satisfying $s t \equiv r \pmod m$.
For each $s \in \mathcal{S}$, let $t(s)$ be the minimal~$t$.
Then, for all $x \geq 1$, we have
\begin{equation*}
\bigcup_{n \,\in\, \mathcal{A}_{r,m}(x)} \mathcal{D}^\prime(n) = \bigcup_{s \,\in\, \mathcal{S}} \mathcal{A}_{2s, 2m}\!\left(\frac{2x}{t(s)}\right) .
\end{equation*}
\end{lem}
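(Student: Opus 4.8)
The plan is to follow the same two-inclusion strategy used for Lemma~\ref{lem:Dn}, after first recording an explicit description of the set $\mathcal{D}^\prime(n)$. Writing $n = 2^a u$ with $u$ odd, a divisor $d \mid 2n = 2^{a+1} u$ fails to divide $n$ precisely when $v_2(d) = a + 1$; equivalently, $d \in \mathcal{D}^\prime(n)$ if and only if $2n = d t$ for some odd integer $t \geq 1$. The cleanest reformulation for the proof is: $d \in \mathcal{D}^\prime(n)$ if and only if $d = 2e$ and $n = e t$ for some integer $e \geq 1$ and some odd integer $t \geq 1$. This makes transparent the two features that distinguish this lemma from Lemma~\ref{lem:Dn}: every element of $\mathcal{D}^\prime(n)$ is even (forcing the residue $2s$ and modulus $2m$ on the right-hand side), and the cofactor $t$ must be odd (which is why $t(s)$ is here defined as a minimum over odd $t$).

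For the inclusion $\subseteq$, I would take $n \in \mathcal{A}_{r,m}(x)$ and $d \in \mathcal{D}^\prime(n)$, and use the reformulation to write $d = 2e$, $n = et$ with $t$ odd. Choosing $s \in \{1, \dots, m\}$ with $e \equiv s \pmod m$, the congruence $s t \equiv e t \equiv n \equiv r \pmod m$ shows $s \in \mathcal{S}$ and, by minimality of $t(s)$ among odd cofactors, $t \geq t(s)$. Then $d = 2e \equiv 2s \pmod{2m}$ and $d = 2n/t \leq 2x/t \leq 2x/t(s)$, so $d \in \mathcal{A}_{2s, 2m}(2x/t(s))$.

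For the reverse inclusion, I would start from $d \in \mathcal{A}_{2s, 2m}(2x/t(s))$ with $s \in \mathcal{S}$. Since $d \equiv 2s \pmod{2m}$, the number $d$ is even, so write $d = 2e$ with $e \equiv s \pmod m$, and set $n := e\, t(s)$. By the defining property of $t(s)$ we get $n \equiv s\, t(s) \equiv r \pmod m$, while $d \leq 2x/t(s)$ gives $n = (d/2)\, t(s) \leq x$; hence $n \in \mathcal{A}_{r,m}(x)$. Finally $2n = 2 e\, t(s) = d\, t(s)$ with $t(s)$ odd, so $d \in \mathcal{D}^\prime(n)$ by the reformulation, completing the proof.

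I expect the only real obstacle to be the bookkeeping at the outset: pinning down the correct characterization of $\mathcal{D}^\prime(n)$ and checking that the parity constraints line up exactly with the doubling $m \mapsto 2m$, $s \mapsto 2s$ and the factor $2$ in the upper bound $2x/t(s)$. Once the reformulation $d = 2e$, $n = et$ (with $t$ odd) is in hand, the argument is a routine adaptation of the proof of Lemma~\ref{lem:Dn}.
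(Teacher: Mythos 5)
Your proposal is correct and follows essentially the same two-inclusion argument as the paper's proof, which likewise uses the characterization that $d \in \mathcal{D}^\prime(n)$ exactly when $2n = dt$ for some odd $t$; you merely make that reformulation explicit before proceeding. No gaps.
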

\begin{proof}
On the one hand, let $n \in \mathcal{A}_{r, m}(x)$ and pick $d \in \mathcal{D}^\prime(n)$.
Then $2n = dt$ for some odd integer $t \geq 1$.
In particular, $d$ is even.
Let $s \in \{1, \dots, m\}$ such that $\tfrac{d}{2} \equiv s \pmod {m}$.
Then $s t \equiv \tfrac{d}{2} t \equiv n \equiv r \pmod {m}$, and consequently $s \in \mathcal{S}$ and $t \geq t(s)$.
Therefore, $d = 2n / t \leq 2x / t(s)$, so that $d \in \mathcal{A}_{2s, 2m}(2x / t(s))$.

On the other hand, suppose that $d \in \mathcal{A}_{2s, 2m}(2x / t(s))$ for some $s \in \mathcal{S}$.
In particular, $d$ is even and $\tfrac{d}{2} \equiv s \pmod m$.
Letting $n := \tfrac{d}{2}t(s)$, we have $n \equiv s t(s) \equiv r \pmod m$ and $n \leq x$, that is, $n \in \mathcal{A}_{r,m}(x)$.
Finally, $2n = d t(s)$ and $t(s)$ is odd, so that $d \in \mathcal{D}^\prime(n)$.
\end{proof}

Finally, we need two asymptotic formulas for sums of the Euler's function over an arithmetic progression.

\begin{lem}\label{lem:sumtotient}
Let $r,m$ be positive integers.
Then, for every $x \geq 2$, we have
\begin{equation*}
S_{r,m}(x) := \sum_{n \,\in\, \mathcal{A}_{r, m}(x)} \varphi(n) = \frac{3}{\pi^2} \cdot c_{r,m} x^2 + O_{r,m}(x \log x) ,
\end{equation*}
where
\begin{equation*}
c_{r, m} := \frac1{m} \prod_{\substack{p \,\mid\, m \\ p \,\mid\, r}} \left(1 + \frac1{p}\right)^{-1} \prod_{\substack{p \,\mid\, m \\ p \,\nmid\, r}} \left(1 - \frac1{p^2}\right)^{-1} .
\end{equation*}
\end{lem}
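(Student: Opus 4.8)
The plan is to prove the asymptotic formula for $S_{r,m}(x)$ by reducing the sum of $\varphi$ over an arithmetic progression to a sum involving the Möbius function, then applying a standard Dirichlet-hyperbola-style argument. I would start from the classical identity $\varphi(n) = \sum_{d \mid n} \mu(d) \cdot (n/d)$, which lets me write
\begin{equation*}
S_{r,m}(x) = \sum_{\substack{n \,\leq\, x \\ n \,\equiv\, r \,(m)}} \sum_{d \,\mid\, n} \mu(d) \cdot \frac{n}{d} = \sum_{d \,\leq\, x} \mu(d) \sum_{\substack{e \,\leq\, x/d \\ de \,\equiv\, r \,(m)}} e ,
\end{equation*}
where I have set $n = de$ and swapped the order of summation. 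The inner congruence condition $de \equiv r \pmod m$ constrains $e$ modulo $m / \gcd(d,m)$, and it is solvable in $e$ exactly when $\gcd(d,m) \mid r$. When solvable, the residue class for $e$ is unique modulo $m/\gcd(d,m)$, so the inner sum is a sum of an arithmetic progression up to $x/d$, which evaluates to $\tfrac{1}{2}\,\gcd(d,m)/m \cdot (x/d)^2 + O(x/d)$.

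Substituting this evaluation back, the main term becomes $\tfrac{x^2}{2m} \sum_{d} \mu(d)\,\gcd(d,m)/d^2$, restricted to those $d$ with $\gcd(d,m) \mid r$, and the error term is $O\big(x \sum_{d \leq x} 1/d\big) = O(x \log x)$, with the implied constant depending on $r,m$. The key computation is then to identify the Dirichlet series
\begin{equation*}
T_{r,m} := \sum_{\substack{d \,=\, 1 \\ \gcd(d,m) \,\mid\, r}}^\infty \frac{\mu(d) \gcd(d,m)}{d^2}
\end{equation*}
as an Euler product and to check that $\tfrac{1}{2m} T_{r,m}$ matches $\tfrac{3}{\pi^2} c_{r,m}$. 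Since $\mu$ is supported on squarefree $d$, each local factor is a product over primes $p$: primes $p \nmid m$ contribute $(1 - 1/p^2)$, and primes $p \mid m$ contribute a factor depending on whether $p \mid r$. Multiplying by the tail $\prod_{p \nmid m}(1-1/p^2)$ and recognizing $\prod_p (1 - 1/p^2) = 6/\pi^2$, I expect the product over $p \mid m$ to rearrange precisely into the two correction products in $c_{r,m}$: a factor $(1 + 1/p)^{-1}$ when $p \mid r$ and $(1 - 1/p^2)^{-1}$ when $p \nmid r$.

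\emph{The main obstacle} will be the careful local bookkeeping at primes $p \mid m$, where the divisibility constraint $\gcd(d,m) \mid r$ interacts with the factor $\gcd(d,m)$ in a way that depends on whether $p \mid r$. Concretely, for $p \mid m$ the admissible exponents of $p$ in $d$ are $0$ and $1$ (by squarefreeness), but the term $d = p$ is only allowed when $p \mid \gcd(d,m) = p$ divides $r$; so the local factor is $1 - (\text{something})/p$ exactly when $p \mid r$ and is simply $1$ when $p \nmid r$. Verifying that these two cases produce the stated reciprocal products—rather than, say, their reciprocals or an off-by-one shift in the power of $p$—is the delicate step, and I would confirm it by computing the local factor at a single prime $p \mid m$ in each case and matching it against the corresponding factor in $c_{r,m}$ after dividing by the global constant $6/\pi^2$.
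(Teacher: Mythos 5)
Your proof is correct, but it is a genuinely different route from the paper's: the paper disposes of this lemma in one line by citing Shapiro's general asymptotic for $\sum_{n \leq x} \varphi(f(n))$ with $f$ a polynomial without multiple roots, whereas you give a self-contained elementary argument via $\varphi(n) = \sum_{d \mid n} \mu(d)\,(n/d)$ and an Euler product. Your bookkeeping checks out: the congruence $de \equiv r \pmod{m}$ is solvable in $e$ iff $g := \gcd(d,m)$ divides $r$, with solutions in a single class mod $m/g$, giving the inner sum $\tfrac{g}{2m}(x/d)^2 + O_m(x/d)$; for squarefree $d$ the constraint $\gcd(d,m) \mid r$ is checked prime by prime, so $T_{r,m}$ factors with local factor $1 - 1/p^2$ at $p \nmid m$, $\;1 - 1/p$ at $p \mid m$, $p \mid r$, and $1$ at $p \mid m$, $p \nmid r$; then
\begin{equation*}
\frac{T_{r,m}}{2m} = \frac{3}{\pi^2} \cdot \frac{1}{m} \prod_{\substack{p \,\mid\, m \\ p \,\mid\, r}} \frac{1 - 1/p}{1 - 1/p^2} \prod_{\substack{p \,\mid\, m \\ p \,\nmid\, r}} \left(1 - \frac{1}{p^2}\right)^{-1} ,
\end{equation*}
and $(1-1/p)/(1-1/p^2) = (1+1/p)^{-1}$ recovers $c_{r,m}$ exactly (a sanity check with $m=2$: $c_{1,2}=2/3$, $c_{2,2}=1/3$, summing to the classical $3/\pi^2$). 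The only small step you should make explicit is extending the $d$-sum from $d \leq x$ to $d = \infty$ in the main term, whose cost is $x^2 \sum_{d > x} \gcd(d,m)/d^2 = O_m(x)$, absorbed by $O_{r,m}(x \log x)$. What your approach buys is a fully elementary, verifiable proof with an explicit constant derivation; what the paper's citation buys is brevity and a ready-made uniform error term.
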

\begin{proof}
This is a special case of the asymptotic formula, given by Shapiro~\cite[Theorem~5.5A.2]{MR693458}, for $\sum_{n \leq x} \varphi(f(n))$, where $f$ a polynomial with integers coefficients, no multiple roots, and satisfying $f(n) \geq 1$ for every integer $n \geq 1$.
\end{proof}

\begin{lem}\label{lem:sumtotientexp}
Let $r,m$ be positive integers and let $z \in (0,1)$.
Then, for every $x \geq 2$, we have
\begin{equation*}
\sum_{n \,\in\, \mathcal{A}_{r, m}(x)} \varphi(n)\big(1 - z^{\lfloor x / n \rfloor}\big) = \frac{3}{\pi^2} \cdot \frac{c_{r,m}(1 - z)\Li_2(z)}{z} \cdot x^2 + O_{r, m}\big(x (\log x)^2\big) .
\end{equation*}
\end{lem}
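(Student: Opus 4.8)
The plan is to separate out the new ingredient and reduce the rest to Lemma~\ref{lem:sumtotient}. Since the summand factors as $1 - z^{\lfloor x/n\rfloor}$, I would first write
\[
\sum_{n\in\mathcal{A}_{r,m}(x)}\varphi(n)\big(1-z^{\lfloor x/n\rfloor}\big) = S_{r,m}(x) - U, \qquad U := \sum_{n\in\mathcal{A}_{r,m}(x)}\varphi(n)\,z^{\lfloor x/n\rfloor},
\]
and apply Lemma~\ref{lem:sumtotient} to get $S_{r,m}(x) = \tfrac{3}{\pi^2}c_{r,m}x^2 + O_{r,m}(x\log x)$. Everything then reduces to evaluating $U$.

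The key observation is that $\lfloor x/n\rfloor$ is constant on the blocks $x/(k+1) < n \le x/k$, so I would group $U$ by the common value $k=\lfloor x/n\rfloor$ and set $\Delta_k := S_{r,m}(x/k) - S_{r,m}(x/(k+1))$, giving $U = \sum_{k\ge1} z^k\Delta_k$. Because $z\in(0,1)$, the weight $z^k$ damps the large-$k$ terms, which is precisely the range where $S_{r,m}$ has a small argument and its asymptotic is useless. I would therefore fix a threshold $L := \lceil 2\log x/\log(1/z)\rceil \asymp \log x$, chosen so that $z^{L}\le x^{-2}$. Since $\Delta_k\ge0$ and $z^k\le z^{L+1}$ for $k>L$, the tail telescopes:
\[
\sum_{k>L} z^k\Delta_k \le z^{L+1}\sum_{k>L}\Delta_k = z^{L+1}S_{r,m}\big(x/(L+1)\big) = O\big(z^{L}x^2/L^2\big) = O(1),
\]
where I used $S_{r,m}(x/(L+1)) = O(x^2/L^2)$ from Lemma~\ref{lem:sumtotient} (legitimate because $x/(L+1)\to\infty$).

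For $k\le L$ one has $x/k\ge x/L\to\infty$, so Lemma~\ref{lem:sumtotient} applies to both $S_{r,m}(x/k)$ and $S_{r,m}(x/(k+1))$, and the leading parts assemble into $\tfrac{3}{\pi^2}c_{r,m}x^2\sum_{k\le L}z^k\big(\tfrac1{k^2}-\tfrac1{(k+1)^2}\big)$. The tail $k>L$ of this series is again $O(1)$ by telescoping, so I may freely extend it to infinity; then a short rearrangement using $\Li_2(z)=\sum_{k\ge1}z^k/k^2$ and $\sum_{k\ge1}z^k/(k+1)^2 = \tfrac1z(\Li_2(z)-z)$ yields
\[
\sum_{k\ge1}z^k\Big(\tfrac1{k^2}-\tfrac1{(k+1)^2}\Big) = 1 - \frac{(1-z)\Li_2(z)}{z}.
\]
The error incurred by the asymptotic over $k\le L$ is $\sum_{k\le L}z^k\,O_{r,m}\!\big(\tfrac{x}{k}\log x\big) = O_{r,m}(x\log x)\sum_{k\ge1}z^k/k = O_{r,m}(x\log x)$, comfortably within the claimed $O_{r,m}(x(\log x)^2)$. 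Collecting terms gives $U = \tfrac{3}{\pi^2}c_{r,m}x^2\big(1-\tfrac{(1-z)\Li_2(z)}{z}\big) + O_{r,m}(x(\log x)^2)$, and subtracting this from $S_{r,m}(x)$ cancels the stray $x^2$ and leaves $\tfrac{3}{\pi^2}\cdot\tfrac{c_{r,m}(1-z)\Li_2(z)}{z}\,x^2$, as desired.

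I expect the only real difficulty to be the calibration at the two ends of the $k$-range: picking $L$ large enough that $z^{L}x^2$ is negligible but small enough ($L\asymp\log x$) that $x/k\ge 2$ for all $k\le L$, so that Lemma~\ref{lem:sumtotient} is genuinely applicable there, and verifying that both truncation tails (of the exact sum $U$ and of the $\Li_2$-series) are $O(1)$. Once the truncation is fixed, the dilogarithm identity and the cancellation $S_{r,m}(x)-U$ are purely mechanical.
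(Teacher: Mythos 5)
Your proposal is correct and follows essentially the same route as the paper: both group the sum by the common value $k=\lfloor x/n\rfloor$, telescope against $S_{r,m}$, apply Lemma~\ref{lem:sumtotient} termwise, and identify the resulting series with $\Li_2(z)$. The only cosmetic differences are that the paper performs the partial summation directly on the weights $1-z^k$ (so the coefficient $(1-z)z^{k-1}$ appears at once, with the sum simply cut at $k\le x$), whereas you split off $S_{r,m}(x)$, truncate at $L\asymp\log x$, and recombine via the identity $\sum_{k\ge 1}z^k\bigl(k^{-2}-(k+1)^{-2}\bigr)=1-(1-z)\Li_2(z)/z$.
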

\begin{proof}
For every integer $k \geq 1$, we have $\lfloor x / n \rfloor = k$ if and only if $x / (k + 1) < n \leq x / k$.
Hence,
\begin{align*}
\sum_{n \,\in\, \mathcal{A}_{r, m}(x)} \varphi(n)\big(1 - z^{\lfloor x / n \rfloor}\big) &= \sum_{k \,\leq\, x} \big(1 - z^k\big) \left(S_{r,m}\!\left(\frac{x}{k}\right) - S_{r,m}\!\left(\frac{x}{k + 1}\right)\right) \\
 &= \sum_{k \,\leq\, x} \left(\big(1 - z^k\big) - \big(1 - z^{k-1}\big)\right) S_{r,m}\!\left(\frac{x}{k}\right) \\
 &= (1 - z)\sum_{k \,\leq\, x} z^{k-1}\left(\frac{3}{\pi^2} \cdot \frac{c_{r,m}x^2}{k^2} + O_{r,m}\!\left(\frac{x \log x}{k} \right)\right) \\
 &= \frac{3}{\pi^2} \cdot c_{r,m} (1 - z) \sum_{k \,=\, 1}^\infty \frac{z^{k-1}}{k^2} \cdot x^2 + O_{r,m}\!\left(\sum_{k \,>\, x} \frac{x^2}{k^2} \right) + O_{r,m}\!\left(\sum_{k \,\leq\, x} \frac{x \log x}{k} \right) \\
 &= \frac{3}{\pi^2} \cdot \frac{c_{r,m}(1 - z)\Li_2(z)}{z} \cdot x^2 + O_{r,m}\big(x(\log x)^2\big) ,
\end{align*}
where we employed Lemma~\ref{lem:sumtotient}.
\end{proof}

\section{Proof of Theorem~\ref{thm:periodic}}

Let $\mathbf{s} = (s_n)_{n \geq 1}$ be a periodic sequence in $\{-1, +1\}$, and let $T \geq 1$ be the length of its period.
By the periodicity of $\mathbf{s}$, it follows that there exist $\mathcal{R}_1, \mathcal{R}_2 \subseteq \{1, \dots, m\}$, where $m := 2T$, such that
\begin{equation*}
\mathcal{F}_\mathbf{s}(n) = \bigcup_{r \,\in\, \mathcal{R}_1} \mathcal{A}_{r, m}(n / 2) \quad\text{and}\quad \mathcal{L}_\mathbf{s}(n) = \bigcup_{r \,\in\, \mathcal{R}_2} \mathcal{A}_{r, m}(n / 2) ,
\end{equation*}
for every integer $n \geq 1$.

Then, by Lemma~\ref{lem:Dn} and Lemma~\ref{lem:Dnprime}, we get that there exist $\mathcal{R} \subseteq \{1, \dots, 2m\}$ and positive rational numbers $(\theta_r)_{r \in \mathcal{R}}$ such that
\begin{equation*}
\mathcal{M}_\mathbf{s}(n) = \bigcup_{r \,\in\, \mathcal{R}} \mathcal{A}_{r, 2m}\!\left(\theta_r n\right) ,
\end{equation*}
for every integer $n \geq 1$.

Therefore, Lemma~\ref{lem:lcm} and Lemma~\ref{lem:sumtotient} yield that
\begin{equation*}
\log \ell_\mathbf{s}(n) = \sum_{r \,\in\, \mathcal{R}} \;\sum_{d \,\in\, \mathcal{A}_{r, 2m}(\theta_r n)} \varphi(d) \log \alpha + O\!\left(\frac{n^2}{\log n}\right) = \frac{3 \log \alpha}{\pi^2} \cdot C_{\mathbf{s}} \cdot n^2 + O_\mathbf{s}\!\left(\frac{n^2}{\log n}\right) ,
\end{equation*}
where
\begin{equation*}
C_{\mathbf{s}} := \sum_{r \,\in\, \mathcal{R}} c_{r, 2m} \,\theta_r^2
\end{equation*}
is a positive rational number effectively computable in terms of $s_1, \dots, s_T$.

The proof is complete.

\section{Proof of Theorem~\ref{thm:random}}

Let $\mathbf{s} = (s_n)_{n \geq 1}$ be a sequence of independent and uniformly distributed random variables in $\{-1, +1\}$, and let $n \geq 1$ be a sufficiently large integer.
For every integer $k \in [2, n / 2]$, we have that the event $k \notin \mathcal{F}_\mathbf{s}(n)$, respectively $k \notin \mathcal{L}_\mathbf{s}(n)$, depends only on $s_{2k-2}, s_{2k-1}, s_{2k+1}, s_{2k+2}$.
In~particular, if the integers $k_1, k_2 \in [2, n / 2]$ satisfy $|k_1 - k_2| \geq 3$ then $\big(k_1 \notin \mathcal{F}_\mathbf{s}(n), k_2 \notin \mathcal{F}_\mathbf{s}(n)\big)$, $\big(k_1 \notin \mathcal{L}_\mathbf{s}(n), k_2 \notin \mathcal{L}_\mathbf{s}(n)\big)$, and $\big(k_1 \notin \mathcal{F}_\mathbf{s}(n), k_2 \notin \mathcal{L}_\mathbf{s}(n)\big)$ are pairs of independent events.
Moreover, we have
\begin{equation*}
\mathbb{P}\big[k \notin \mathcal{F}_\mathbf{s}(n) \big] = \mathbb{P}\big[k \notin \mathcal{L}_\mathbf{s}(n) \big] = 2^{-4} = 16^{-1} .
\end{equation*}

Therefore, it follows that
\begin{align*}
\mathbb{P}\big[d \notin \mathcal{M}_\mathbf{s}(n)\big] &= \mathbb{P}\Big[\bigwedge_{\substack{2 \,\leq\, k \,\leq\, n / 2 \\ d \,\in\, \mathcal{D}(k)}} \big(k \notin \mathcal{F}_\mathbf{s}(n)\big) \;\land\; \bigwedge_{\substack{2 \,\leq\, h \,\leq\, n / 2 \\ d \,\in\, \mathcal{D}^\prime(h)}} \big(h \notin \mathcal{L}_\mathbf{s}(n)\big)\Big] \\
&= \prod_{\substack{2 \,\leq\, k \,\leq\, n / 2 \\ d \,\in\, \mathcal{D}(k)}} \mathbb{P}\big[k \notin \mathcal{F}_\mathbf{s}(n)\big] \;\cdot\; \prod_{\substack{2 \,\leq\, h \,\leq\, n / 2 \\ d \,\in\, \mathcal{D}^\prime(h)}} \mathbb{P}\big[h \notin \mathcal{L}_\mathbf{s}(n)\big] \\
&= 16^{-\lfloor n / (2d) \rfloor} \;\cdot\; 16^{-\big(\lfloor n \gcd(2, d) / (2d) \rfloor -\lfloor n / (2d) \rfloor\big)} \\
&= 16^{-\lfloor n \gcd(2, d) / (2d) \rfloor} ,
\end{align*}
for every integer $d \geq 6$.
Consequently, by Lemma~\ref{lem:lcm}, we get
\begin{align}\label{equ:last1}
\mathbb{E}\big[\log \ell_\mathbf{s}(n)\big] &= (\log \alpha) \sum_{d \,\in\, \mathcal{M}_\mathbf{s}(n)} \varphi(d) \,\mathbb{P}[d \in \mathcal{M}_\mathbf{s}(n)] + O\!\left(\frac{n^2}{\log n}\right) \\
 &= (\log \alpha) \sum_{d \,\leq\, n} \varphi(d) \left(1 - 16^{-\lfloor n \gcd(2, d) / (2d) \rfloor}\right)  + O\!\left(\frac{n^2}{\log n}\right) . \nonumber
\end{align}
In turn, by Lemma~\ref{lem:sumtotientexp}, we have
\begin{align}\label{equ:last2}
\sum_{d \,\leq\, n} \varphi(d) \left(1 - 16^{-\lfloor n \gcd(2, d) / (2d) \rfloor}\right) &= \sum_{d \,\in\, \mathcal{A}_{1,2}(n / 2)} \varphi(d) \left(1 - 16^{-\lfloor n / (2d) \rfloor}\right) \\
&\phantom{MMMMM}+ \sum_{d \,\in\, \mathcal{A}_{2,2}(n)} \varphi(d) \left(1 - 16^{-\lfloor n / d \rfloor}\right) \nonumber\\
&= \frac{3}{\pi^2} \cdot \left(\frac{c_{1,2}}{4} + c_{2,2}\right) 15 \Li_2(1 / 16) \cdot n^2 + O\big(n (\log n)^2\big) \nonumber\\
&= \frac{3}{\pi^2} \cdot \frac{15 \Li_2(1 / 16)}{2} \cdot n^2 + O\big(n (\log n)^2\big) . \nonumber
\end{align}
Finally, putting together~\eqref{equ:last1} and~\eqref{equ:last2}, we obtain
\begin{equation*}
\mathbb{E}\big[\log \ell_\mathbf{s}(n)\big] \sim \frac{3 \log \alpha}{\pi^2} \cdot \frac{15 \Li_2(1 / 16)}{2} \cdot n^2 ,
\end{equation*}
as $n \to +\infty$.

The proof is complete.

\bibliographystyle{amsplain}

\end{document}